\def\section{\@startsection{section}{1}%
	\z@{.7\linespacing\@plus\linespacing}{.5\linespacing}%
	{\bfseries%
		\centering
	}}
	\def\@secnumfont{\bfseries}
	\newtheorem{theorem}{Theorem}[section]
	\theoremstyle{definition}
	\newtheorem{definition}[theorem]{Definition}
	\newtheorem{example}[theorem]{Example}
	\theoremstyle{remark}
	\newtheorem{remark}[theorem]{Remark}
	\numberwithin{equation}{section}
\begin{document}

\title[A Table of Derivatives on Arbitrary Time Scales]{A Table of Derivatives on Arbitrary Time Scales}

\author{Delfim F. M. Torres}

\address{Center for Research and Development in Mathematics and Applications (CIDMA),
Department of Mathematics, University of Aveiro, 3810--193 Aveiro, Portugal}

\email{delfim@ua.pt}

\urladdr{http://orcid.org/0000-0001-8641-2505}

\subjclass[2010]{Primary 26E70; Secondary 33B10.}

\keywords{calculus on time scales; differentiation;
table of derivatives on arbitrary time scales;
trigonometric functions on time scales;
hyperbolic functions on time scales.}


\begin{abstract}
This paper presents a collection of useful formulas
of dynamic derivatives on time scales, systematically 
collected for reference purposes. As an application,
we define trigonometric and hyperbolic functions 
on time scales in such a way the most essential 
qualitative properties of the corresponding 
continuous functions are generalized in a proper way. 
\end{abstract}

\maketitle


\section{Introduction}

The calculus on time scales is a unification of the theory 
of difference equations with that of differential equations, 
unifying integral and differential calculus with the calculus 
of finite differences and offering a formalism for studying 
hybrid discrete-continuous dynamical systems \cite{MR1062633,MR1066641}.
It has applications in any field that requires simultaneous 
modeling of discrete and continuous data \cite{MR1908825,MR1843232}.
Roughly speaking, the theory is based on a new definition of derivative, 
such that if one differentiates a function which acts on the real numbers, 
then the definition is equivalent to standard differentiation, 
but if one uses a function acting on the integers, then it is equivalent 
to the forward difference operator. The time-scale derivative applies, however, 
not only to the set of real numbers or set of integers, but 
to any time scale, that is, any closed subset $\mathbb{T}$ of the reals, 
such as the Cantor set. It turns out that closed form formulas of derivatives 
of concrete functions, valid on arbitrary time scales, 
are difficult to find in the literature, with some examples scattered
over the literature. This paper intends to fill the gap by gathering
systematically some of the most useful formulas of derivatives on time scales
for reference purposes. Moreover, we present a simple method
on how to prove all such formulas, based on the graininess function 
of the time scale and on a suitable integral representation of the time-scale
derivative (Theorem~\ref{thm:mt}).

The text is organized as follows. Section~\ref{sec:2} 
presents the notations used along the manuscript and 
recalls the definition of delta derivative on time scales and one
of its well-know important properties (Theorem~\ref{thm:util}).
Section~\ref{sec:3} provides a ``table'' of delta derivatives for twenty
functions, organized in different subsections: derivatives of basic functions
(Section~\ref{sec:3:1}); roots (Section~\ref{sec:3:2});
logarithms (Section~\ref{sec:3:3});
exponentials (Section~\ref{sec:3:4});
trigonometric functions (Section~\ref{sec:3:5});
products of trigonometric functions and monomials (Section~\ref{sec:3:6})
and trigonometric functions and exponentials (Section~\ref{sec:3:7});
and hyperbolic functions (Section~\ref{sec:3:8}). In Section~\ref{sec:4}
we prove a simple but powerful integral representation of the
time-scale derivative (Theorem~\ref{thm:mt}) that allows to
obtain all given relations at Section~\ref{sec:3} and others
the reader may wish to prove. We end with Section~\ref{sec:5} where, 
as an application of the given table of derivatives, we propose new definitions 
of trigonometric and hyperbolic functions on time scales
that extend, in a proper way, the most essential qualitative 
properties of the corresponding continuous functions. 

Two remarks are in order. Similarly to the classical calculus,
integration on time scales is the inverse operation of differentiation,
that is, the delta integral is defined as the antiderivative 
with respect to the delta derivative. This means that our formulas 
provide also closed formulas for integration on time scales.
Our second remark concerns the dual concept 
of nabla derivative \cite{MR1911548,MR2771298}:
all formulas given here are also true for the nabla derivative
if one substitutes the forward graininess function by the symmetric 
of the backward graininess function, that is, 
$\mu(t)$ by $- \nu(t)$.


\section{The delta derivative}
\label{sec:2}

A {\it time scale} $\mathbb{T}$ is an arbitrary nonempty closed subset
of $\mathbb{R}$.  Besides standard cases of $\mathbb{R}$ (continuous time)
and $\mathbb{Z}$ (discrete time), many different models of time may be used,
\textrm{e.g.}, the $h$-numbers, 
$\mathbb{T} = h\mathbb{Z}:=\{h z \ | \  z \in \mathbb{Z}\}$,
where $h>0$ is a fixed real number, and the $q$-numbers, 
$\mathbb{T} = q^{\mathbb{N}_0}:=\{q^k \ | \  k \in \mathbb{N}_0\}$,
where $q>1$ is a fixed real number. A time scale $\mathbb{T}$ 
has the topology that it inherits from the real numbers
with the standard topology. For each time scale $\mathbb{T}$, 
the following operators are used:
\begin{itemize}
\item the {\it forward jump operator} $\sigma:\mathbb{T} \rightarrow \mathbb{T}$,
defined by $\sigma(t):=\inf\{s \in \mathbb{T}:s>t\}$ for $t\neq\sup \mathbb{T}$
and $\sigma(\sup\mathbb{T})=\sup\mathbb{T}$ if $\sup\mathbb{T}<+\infty$;
	
\item the {\it backward jump operator} $\rho:\mathbb{T} \rightarrow \mathbb{T}$,
defined by $\rho(t):=\sup\{s \in \mathbb{T}:s<t\}$ for $t\neq\inf \mathbb{T}$
and $\rho(\inf\mathbb{T})=\inf\mathbb{T}$ if $\inf\mathbb{T}>-\infty$;
	
\item the {\it forward graininess function} $\mu:\mathbb{T} \rightarrow [0,\infty)$,
defined by $\mu(t):=\sigma(t)-t$;
	
\item the \emph{backward graininess function}
$\nu:\mathbb{T}\rightarrow[0,\infty)$, defined by
$\nu(t):=t - \rho(t)$.
	
\end{itemize}

\begin{example}
If $\mathbb{T}=\mathbb{R}$, then for any $t \in \mathbb{T}$
one has $\sigma(t)=\rho(t)=t$ and $\mu(t) = \nu(t) \equiv 0$.
If $\mathbb{T}=h\mathbb{Z}$, $h > 0$, then for every $t \in \mathbb{T}$
one has $\sigma(t)=t+h$, $\rho(t)=t-h$, and $\mu(t) = \nu(t) \equiv h$.
On the other hand, if $\mathbb{T} = q^{\mathbb{N}_0}$, $q>1$,
then we have $\sigma(t) = q t$, $\rho(t) = q^{-1} t$,
$\mu(t)= (q-1) t$, and $\nu(t)= (1-q^{-1}) t$.
\end{example}

A point $t\in\mathbb{T}$ is called \emph{right-dense},
\emph{right-scattered}, \emph{left-dense} or \emph{left-scattered},
if $\sigma(t)=t$, $\sigma(t)>t$, $\rho(t)=t$
or $\rho(t)<t$, respectively. We say that $t$ is \emph{isolated}
if $\rho(t)<t<\sigma(t)$, that $t$ is \emph{dense} if $\rho(t)=t=\sigma(t)$.

If $\sup \mathbb{T}$ is finite and left-scattered, we define
$\mathbb{T}^\kappa := \mathbb{T}\setminus \{\sup\mathbb{T}\}$,
otherwise $\mathbb{T}^\kappa :=\mathbb{T}$.

\begin{definition}
\label{def:de:dif} 
We say that a function
$f:\mathbb{T}\rightarrow\mathbb{R}$ is \emph{delta differentiable}
at $t\in\mathbb{T}^{\kappa}$ if there exists a number
$f^{\Delta}(t)$ such that for all $\varepsilon>0$ there is a
neighborhood $U$ of $t$ (\textrm{i.e.},
$U=(t-\delta,t+\delta)\cap\mathbb{T}$ for some $\delta>0$) such that
$$
|f(\sigma(t))-f(s)-f^{\Delta}(t)(\sigma(t)-s)|
\leq\varepsilon|\sigma(t)-s|,\mbox{ for all $s\in U$}.
$$ 
We call $f^{\Delta}(t)$ the \emph{delta derivative} of $f$ at $t$ and $f$ is
said \emph{delta differentiable} on $\mathbb{T}^{\kappa}$ provided
$f^{\Delta}(t)$ exists for all $t\in\mathbb{T}^{\kappa}$.
\end{definition}

\begin{remark}
If $t \in \mathbb{T} \setminus \mathbb{T}^\kappa$, then
$f^{\Delta}(t)$ is not uniquely defined, since for such a point $t$,
small neighborhoods $U$ of $t$ consist only of $t$ and, besides, we
have $\sigma(t) = t$. For this reason, maximal left-scattered points
are omitted in Definition~\ref{def:de:dif}.
\end{remark}

\begin{example}
\label{ex:der:QC}
If $\mathbb{T}=\mathbb{R}$, then $f:\mathbb{R} \rightarrow \mathbb{R}$ is
delta differentiable at $t \in \mathbb{R}$ if and only if $f$ is differentiable in
the ordinary sense at $t$. Then, $f^{\Delta}(t)=f'(t)$.
If $\mathbb{T} = h\mathbb{Z}$, $h > 0$, then
$f:\mathbb{Z} \rightarrow \mathbb{R}$ is always delta differentiable
at every $t \in \mathbb{Z}$ with $f^{\Delta}(t) = \frac{f(t+h) - f(t)}{h}$.
If $\mathbb{T} = q^{\mathbb{N}_0}$, $q>1$,
then $f^{\Delta} (t)=\frac{f(q t)-f(t)}{(q-1) t}$,
\textrm{i.e.}, we get the usual $q$-derivative
of quantum calculus \cite{QC}.
\end{example}

\begin{theorem}[See Theorem~1.16 of \cite{MR1843232}]
\label{thm:util}
Let $\mathbb{T}$ be an arbitrarily given time scale.
If $f$ is delta differentiable at $t\in\mathbb{T}^{\kappa}$, then 
$f(\sigma(t)) = f(t) + \mu(t) f^\Delta(t)$.
\end{theorem}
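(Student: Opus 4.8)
The plan is to distinguish two cases according to whether the point $t$ is right-dense or right-scattered, and in each case to exploit Definition~\ref{def:de:dif} directly. First I would observe that if $t$ is right-dense, then $\sigma(t) = t$ and $\mu(t) = 0$, so the claimed identity collapses to the tautology $f(t) = f(t)$ and there is nothing to prove.

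The substance lies in the right-scattered case, where $\sigma(t) > t$ and hence $\mu(t) = \sigma(t) - t > 0$. Here the key step is to specialize the inequality in Definition~\ref{def:de:dif} to the particular value $s = t$, which is legitimate because $t$ always belongs to the neighborhood $U = (t-\delta, t+\delta) \cap \mathbb{T}$. This substitution yields
\[
|f(\sigma(t)) - f(t) - f^\Delta(t)(\sigma(t) - t)| \leq \varepsilon |\sigma(t) - t|,
\]
and since $\sigma(t) - t = \mu(t)$, the bound reads $|f(\sigma(t)) - f(t) - \mu(t) f^\Delta(t)| \leq \varepsilon\, \mu(t)$.

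Finally, I would note that this estimate holds for \emph{every} $\varepsilon > 0$, while the quantity on its left-hand side does not depend on $\varepsilon$. Since $\mu(t)$ is a fixed finite number, a nonnegative quantity bounded by $\varepsilon\,\mu(t)$ for all $\varepsilon > 0$ must vanish; letting $\varepsilon \to 0^+$ therefore forces $|f(\sigma(t)) - f(t) - \mu(t) f^\Delta(t)| = 0$, which is exactly the asserted formula $f(\sigma(t)) = f(t) + \mu(t) f^\Delta(t)$. I do not anticipate any genuine obstacle: the only point demanding care is the admissibility of the choice $s = t$, guaranteed by the form of the neighborhoods in the definition, together with the realization that this single evaluation already extracts all the information the definition provides.
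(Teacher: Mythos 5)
Your proof is correct, and it is essentially the paper's: the paper states this result without proof, citing Theorem~1.16 of \cite{MR1843232}, and your argument --- specializing Definition~\ref{def:de:dif} to $s=t$, using $\sigma(t)-t=\mu(t)$, and letting $\varepsilon\to 0^{+}$ --- is precisely the standard proof given in that reference. As a minor remark, the case distinction is not even needed, since for right-dense $t$ the same substitution $s=t$ yields $|f(\sigma(t))-f(t)|\le \varepsilon\cdot 0=0$ directly, so the single estimate handles both cases at once.
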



\section{Table of derivatives on time scales}
\label{sec:3}

In what follows, $\mathbb{T}$ is an arbitrary time scale
with forward graininess function $\mu(t)$; $k$ and $c$ denote arbitrary
constants; while $n$ is a positive integer number: 
$n \in \mathbb{N} = \{1, 2, 3, \ldots\}$. The following formulas hold
for $t \in \mathbb{T}^\kappa$ in the domain of the function 
under consideration. For example, expression \eqref{eq:R01}
for the root function $f(t) = \sqrt{t}$ holds for any 
$t \in \mathbb{T}^\kappa$ such that $t > 0$.
Note that in all the formulas given here, 
if substituting a value into an expression gives $0/0$, 
the expression has an actual finite value
and one should use limits to determine it.
For example, if $\mu(t) = 0$, then formula \eqref{eq:B03}
gives $\ln(k) \, k^t$ and formula \eqref{eq:R01}
gives $\frac{1}{2 \sqrt{t}}$. 


\subsection{Derivatives of basic functions}
\label{sec:3:1}

\begin{equation}
\label{eq:B01}
(k)^\Delta = 0
\end{equation}

\begin{equation}
\label{eq:B02}
\left(t^n\right)^\Delta = \sum_{i=1}^{n} \binom{n}{i-1}\mu(t)^{n-i} \, t^{i-1}
\end{equation}

\begin{equation}
\label{eq:B03}
\text{For } k > 0,\quad 
\left(k^t\right)^\Delta = \frac{k^{\mu(t)}-1}{\mu(t)} \, k^t
\end{equation}

\begin{equation}
\label{eq:B04}
\left((t+k)^n\right)^\Delta 
= \sum_{i=1}^{n} \binom{n}{i-1} \mu(t)^{n-i} (t+k)^{i-1}  
\end{equation}


\subsection{Derivatives of roots}
\label{sec:3:2}

\begin{equation}
\label{eq:R01}
(\sqrt{t})^\Delta = \frac{\sqrt{t+\mu(t)}-\sqrt{t}}{\mu(t)}
\end{equation}

\begin{equation}
\label{eq:R02}
(\sqrt{k+t^n})^\Delta = \frac{\sqrt{k+(t+\mu(t))^n}-\sqrt{k+t^n}}{\mu(t)}
\end{equation}

\begin{equation}
\label{eq:R03}
\left(t^n \sqrt{k+c t}\right)^\Delta 
= \frac{\sqrt{k+c(t+\mu(t))} 
\left(\sum_{i=0}^{n} \binom{n}{i} \mu(t)^{n-i} t^i\right) - \sqrt{k+ct} \, t^n}{\mu(t)}
\end{equation}


\subsection{Derivatives of logarithms}
\label{sec:3:3}

\begin{equation}
\label{eq:L01}
(\ln(t^n))^\Delta 
= \frac{n \ln\left(1+\frac{\mu(t)}{t}\right)}{\mu(t)}
\end{equation}

\begin{equation}
\label{eq:L02}
(\ln(k t + c))^\Delta 
= \frac{\ln\left(1+\frac{k \mu(t)}{k t + c}\right)}{\mu(t)}
\end{equation}


\subsection{Derivatives of exponentials}
\label{sec:3:4}

\begin{equation}
\label{eq:E01}
\left(e^{k t}\right)^\Delta 
= \frac{e^{k \mu(t)}-1}{\mu(t)} \, e^{k t}
\end{equation}

\begin{equation}
\label{eq:E02}
\left(t^n \, e^{k t}\right)^\Delta 
= \frac{\left(\sum_{i=0}^{n} \binom{n}{i} \mu(t)^{n-i} \, 
t^i\right)e^{k \mu(t)}-t^n}{\mu(t)} \, e^{k t}
\end{equation}


\subsection{Derivatives of trigonometric functions}
\label{sec:3:5}

\begin{equation}
\label{eq:T01}
(\sin t)^\Delta 
= \frac{\sin t \, (\cos \mu(t) - 1) + \cos t \, \sin \mu(t)}{\mu(t)}
\end{equation}

\begin{equation}
\label{eq:T02}
(\cos t)^\Delta 
= \frac{\cos t \, (\cos \mu(t) - 1) - \sin t \, \sin \mu(t)}{\mu(t)}
\end{equation}


\subsection{Derivatives of products of trigonometric functions and monomials}
\label{sec:3:6}

\begin{equation}
\label{eq:TM01}
(t \sin(k t))^\Delta 
= \frac{(t+\mu(t))\left(\sin(k t) \cos(k \mu(t))+\cos(k t)\sin(k \mu(t))\right) 
- t \sin(k t)}{\mu(t)}
\end{equation}

\begin{equation}
\label{eq:TM02}
(t \cos(k t))^\Delta 
= \frac{(t+\mu(t))\left(\cos(k t) \cos(k \mu(t))-\sin(k t)\sin(k \mu(t))\right) 
	- t \cos(k t)}{\mu(t)}
\end{equation}


\subsection{Products of trigonometric functions and exponentials}
\label{sec:3:7}

\begin{equation}
\label{eq:TE01}
\left(e^{k t} \sin(ct)\right)^\Delta 
= \frac{\left( \cos(ct) \sin(c \mu(t)) 
+ \sin(ct) \cos(c \mu(t)) \right) e^{k \mu(t)} 
- \sin(c t)}{\mu(t)} \, e^{k t} 
\end{equation}

\begin{equation}
\label{eq:TE02}
\left(e^{k t} \cos(ct)\right)^\Delta 
= \frac{\left( \cos(ct) \cos(c \mu(t)) 
- \sin(ct) \sin(c \mu(t)) \right) e^{k \mu(t)} 
	- \cos(c t)}{\mu(t)} \, e^{k t} 
\end{equation}


\subsection{Derivatives of hyperbolic functions}
\label{sec:3:8}

\begin{equation}
\label{eq:H01}
\left(\sinh(kt)\right)^\Delta 
= \frac{\left(e^{k (t+\mu(t))} - e^{k t}\right)-\left(e^{-k (t+\mu(t))}-e^{-k t}\right)}{2\mu(t)} 
\end{equation}

\begin{equation}
\label{eq:H02}
\left(\cosh(kt)\right)^\Delta 
= \frac{\left(e^{k (t+\mu(t))} - e^{k t}\right)+\left(e^{-k (t+\mu(t))}-e^{-k t}\right)}{2\mu(t)} 
\end{equation}

\begin{equation}
\label{eq:H03}
\left(\sinh(kt) \cosh(kt)\right)^\Delta 
= \frac{\left(e^{2k(t + \mu(t))}-e^{2 k t}\right) - \left(e^{-2k(t + \mu(t))}-e^{-2 k t}\right)}{4\mu(t)} 
\end{equation}


\section{On the proof of equalities (\ref{eq:B01})--(\ref{eq:H03})}
\label{sec:4}

All the equalities \eqref{eq:B01}--\eqref{eq:H03}
can be proved from the following result.

\begin{theorem}
\label{thm:mt}
Let $f : \mathbb{R} \rightarrow \mathbb{R}$ be differentiable.
Let $\mathbb{T}$ be a given time scale with graininess
function $\mu(t)$. If $f$ is delta differentiable 
at $t \in \mathbb{T}^\kappa$, then
\begin{equation}
\label{eq:myder}
f^\Delta(t) = \int_0^1 f'(t + \tau \mu(t)) d\tau.
\end{equation}
\end{theorem}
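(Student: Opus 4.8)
The plan is to establish \eqref{eq:myder} by a dichotomy on the forward graininess: either $t$ is right-scattered, so that $\mu(t)>0$, or $t$ is right-dense, so that $\mu(t)=0$. The elegance of the integral representation is precisely that it absorbs both regimes into a single formula, but the verification is cleanest when the two cases are treated separately.

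First I would dispose of the right-scattered case $\mu(t)>0$. Here $\sigma(t)=t+\mu(t)>t$, and Theorem~\ref{thm:util} gives $f(\sigma(t))=f(t)+\mu(t)f^\Delta(t)$, whence
\begin{equation*}
f^\Delta(t)=\frac{f(t+\mu(t))-f(t)}{\mu(t)}.
\end{equation*}
Since $f$ is differentiable on $\mathbb{R}$, the fundamental theorem of calculus yields $f(t+\mu(t))-f(t)=\int_t^{t+\mu(t)}f'(s)\,ds$, and the affine change of variables $s=t+\tau\mu(t)$, $ds=\mu(t)\,d\tau$, carries the interval $[t,t+\mu(t)]$ to $[0,1]$ and produces a factor $\mu(t)$ that cancels the denominator. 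This leaves exactly $f^\Delta(t)=\int_0^1 f'(t+\tau\mu(t))\,d\tau$, as claimed.

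It remains to treat the right-dense case $\mu(t)=0$. Now the right-hand side of \eqref{eq:myder} collapses, since $t+\tau\mu(t)=t$ for every $\tau$, to $\int_0^1 f'(t)\,d\tau=f'(t)$; so the whole assertion reduces to showing that the delta derivative coincides with the ordinary derivative at such a point. Because $\sigma(t)=t$, Definition~\ref{def:de:dif} says precisely that for every $\varepsilon>0$ there is a neighborhood $U$ of $t$ on which $|f(t)-f(s)-f^\Delta(t)(t-s)|\le\varepsilon|t-s|$, i.e. the difference quotients of $f$ restricted to $\mathbb{T}$ tend to $f^\Delta(t)$ as $s\to t$ within $\mathbb{T}$. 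Since $f$ is differentiable on all of $\mathbb{R}$, the unrestricted difference quotients tend to $f'(t)$, and \emph{a fortiori} so do those along $\mathbb{T}$; hence $f^\Delta(t)=f'(t)$, completing this case.

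I expect the right-dense case to be the only genuine obstacle: the right-scattered computation is mechanical once Theorem~\ref{thm:util} is invoked, whereas matching the delta derivative to the classical one at a dense point forces one back to the $\varepsilon$-definition. A secondary point worth guarding is the hypothesis needed for the fundamental theorem of calculus, since mere differentiability of $f$ does not by itself guarantee that $f'$ is integrable; one should therefore either assume $f\in C^1$ or simply note that every function appearing in the table of Section~\ref{sec:3} is smooth, which renders the substitution step unconditionally valid.
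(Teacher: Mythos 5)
Your proof is correct, and it is structured differently from the paper's. The paper's proof is a single unified computation: it sets $\phi(\tau)=f(t+\tau\mu(t))$, applies the fundamental theorem of calculus to get $f(\sigma(t))-f(t)=\mu(t)\int_0^1 f'(t+\tau\mu(t))\,d\tau$, and then concludes by comparing with Theorem~\ref{thm:util}. Your right-scattered case is exactly this argument in disguise (your affine substitution $s=t+\tau\mu(t)$ is the paper's auxiliary function $\phi$), so there the two proofs coincide. The genuine difference is your dichotomy on $\mu(t)$: the paper's final step amounts to cancelling $\mu(t)$ from both sides of $\mu(t)f^\Delta(t)=\mu(t)\int_0^1 f'(t+\tau\mu(t))\,d\tau$, which is only legitimate when $\mu(t)\neq 0$; at a right-dense point this identity degenerates to $0=0$ and proves nothing. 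Your separate treatment of the case $\mu(t)=0$ --- showing via Definition~\ref{def:de:dif} and the right-density of $t$ that $f^\Delta(t)=f'(t)$, which matches the collapsed integral $\int_0^1 f'(t)\,d\tau$ --- fills precisely this gap, so your version is the more complete one. (One could alternatively cite the standard fact, Theorem~1.16(iii) of the Bohner--Peterson reference \cite{MR1843232}, that at right-dense points the delta derivative is the limit of difference quotients along $\mathbb{T}$.) Your closing caveat about the fundamental theorem of calculus is also well taken: both your proof and the paper's need $f'$ to be integrable on the relevant interval, which mere pointwise differentiability does not guarantee; assuming $f\in C^1$, as all functions in the table of Section~\ref{sec:3} are, removes the issue, and the paper's proof silently makes the same assumption.
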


\begin{proof}
Let $\phi(\tau) = f(t+\tau \mu(t))$.
Then $\phi(0) = f(t)$, $\phi(1) = f(\sigma(t))$,
and $\phi'(\tau) = f'(t+\tau \mu(t)) \mu(t)$.
From the fundamental theorem of calculus, one has
\begin{equation*}
\begin{split}
f(\sigma(t))-f(t) 
&= \phi(1) - \phi(0) = \int_0^1 \phi'(\tau) d\tau\\
&= \mu(t) \int_0^1 f'(t+\tau \mu(t)) d\tau. 
\end{split}
\end{equation*}
Equality \eqref{eq:myder} follows from Theorem~\ref{thm:util}.
\end{proof}


\section{Trigonometric and hyperbolic functions on time scales}
\label{sec:5}

The question of finding appropriate special functions on arbitrary time scales
is an interesting and nontrivial subject under strong investigation: 
see, e.g., \cite{MR2191011,MR2555897}. 
For a survey of existing definitions and some 
new proposals of trigonometric and hyperbolic functions on time scales
see \cite{MR3128436,MR2869725} and references therein. Here, based 
on our formulas of Section~\ref{sec:3}, we propose new versions 
of the sine, cosine, hyperbolic sine and hyperbolic cosine functions 
that extend, in a proper way, the most essential qualitative properties 
of the corresponding continuous functions. 


\subsection{New definitions of trigonometric functions on time scales}
\label{sec:5:1}

Having in mind \eqref{eq:T01} and \eqref{eq:T02} 
and the properties in $\mathbb{R}$
$$
(\sin t)' = \cos t 
\quad \text{and} \quad 
(\cos t)' = -\sin t,
$$
we define the sine and cosine functions on time scales, 
denoted respectively by $\sin_\mathbb{T}(t)$ and $\cos_\mathbb{T}(t)$, 
as follows.

\begin{definition}[Trigonometric functions]
\label{def:trig}
Let $\mathbb{T}$ be an arbitrarily given time scale 
with forward graininess function $\mu(t)$. The 
sine function on the time scale $\mathbb{T}$,
$\sin_\mathbb{T} : \mathbb{T} \rightarrow \mathbb{R}$, is defined by 
\begin{equation}
\label{eq:def:sin}
\sin_\mathbb{T}(t)
= \frac{\sin t \, \sin \mu(t) - \cos t \, (\cos \mu(t) - 1)}{\mu(t)}
\end{equation}
while the cosine function on the time scale $\mathbb{T}$,
$\cos_\mathbb{T} : \mathbb{T} \rightarrow \mathbb{R}$, is defined by
\begin{equation}
\label{eq:def:cos}
\cos_\mathbb{T}(t)
= \frac{\sin t \, (\cos \mu(t) - 1) + \cos t \, \sin \mu(t)}{\mu(t)}.
\end{equation}
\end{definition}

\begin{remark}
\label{rem:sin:cos}
For $\mathbb{T} = \mathbb{R}$ one has $\mu(t) \equiv 0$
and we obtain from \eqref{eq:def:sin} and \eqref{eq:def:cos} that
$$
\sin_\mathbb{R}(t) = \sin(t)
\quad \text{and} \quad
\cos_\mathbb{R}(t) = \cos(t).
$$
\end{remark}

\begin{theorem}
Let $\mathbb{T}$ be an arbitrarily given time scale.
The following relation holds:
\begin{equation}
\label{rel:cos:sin}
\sin_\mathbb{T}^2(t) + \cos_\mathbb{T}^2(t)   
= \frac{2 (1 - \cos \mu(t))}{\mu^{2}(t)}.
\end{equation}
\end{theorem}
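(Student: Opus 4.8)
The plan is to clear the denominator and recognize that the numerators of $\sin_\mathbb{T}(t)$ and $\cos_\mathbb{T}(t)$ combine through the ordinary Pythagorean identity. To keep the bookkeeping light, I would first abbreviate $A = \sin\mu(t)$ and $B = \cos\mu(t) - 1$, so that by \eqref{eq:def:sin} and \eqref{eq:def:cos} one has $\mu(t)\sin_\mathbb{T}(t) = A\sin t - B\cos t$ and $\mu(t)\cos_\mathbb{T}(t) = B\sin t + A\cos t$. Observe that, up to the scalar $\mu(t)$, the pair $(\sin_\mathbb{T}(t),\cos_\mathbb{T}(t))$ is the image of $(\sin t,\cos t)$ under the matrix with rows $(A,-B)$ and $(B,A)$, which is a scalar multiple of a rotation; this is the structural reason the computation closes.

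Next I would multiply the claimed identity \eqref{rel:cos:sin} through by $\mu^{2}(t)$, reducing the task to showing that $(A\sin t - B\cos t)^2 + (B\sin t + A\cos t)^2 = 2\bigl(1 - \cos\mu(t)\bigr)$. Expanding the two squares, the cross terms $\pm 2AB\,\sin t\cos t$ cancel, and the remaining terms regroup as $(A^2 + B^2)(\sin^2 t + \cos^2 t)$. Invoking $\sin^2 t + \cos^2 t = 1$ collapses the left-hand side to $A^2 + B^2$.

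Finally I would substitute back $A^2 + B^2 = \sin^2\mu(t) + \bigl(\cos\mu(t) - 1\bigr)^2$, expand, and use $\sin^2\mu(t) + \cos^2\mu(t) = 1$ to obtain $A^2 + B^2 = 2 - 2\cos\mu(t) = 2\bigl(1 - \cos\mu(t)\bigr)$, which is exactly the desired numerator. Dividing by $\mu^{2}(t)$ yields \eqref{rel:cos:sin}.

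Since every step is a direct algebraic simplification, there is no genuine obstacle; the only point to watch is the cancellation of the cross terms, which is guaranteed by the orthogonal (rotation-type) structure of the numerators noted above. In accordance with the conventions of Section~\ref{sec:3}, the degenerate case $\mu(t) = 0$ is treated separately by passing to the limit, where, by Remark~\ref{rem:sin:cos}, the left-hand side tends to $\sin^2 t + \cos^2 t = 1$ and the right-hand side likewise tends to $1$ since $1 - \cos\mu(t) \sim \mu^{2}(t)/2$.
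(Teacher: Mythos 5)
Your proof is correct and is precisely the ``direct computation'' from Definition~\ref{def:trig} that the paper invokes, merely spelled out: clearing $\mu^{2}(t)$, cancelling the cross terms, and applying the Pythagorean identity twice. The rotation-matrix observation and the limit treatment of the case $\mu(t)=0$ are welcome additions but do not change the route.
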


\begin{proof}
Follows from Definition~\ref{def:trig} by a direct computation.
\end{proof}

\begin{remark}
For $\mathbb{T} = \mathbb{R}$ one obtains from Remark~\ref{rem:sin:cos}
and \eqref{rel:cos:sin} the fundamental Pythagorean trigonometric identity
of the continuous case: $\sin^2(t) + \cos^2(t) = 1$.
\end{remark}

\subsection{New definitions of hyperbolic functions on time scales}
\label{sec:5:2}

Based on \eqref{eq:H01} and \eqref{eq:H02}, 
and having in mind the properties  in $\mathbb{R}$
$$
(\sinh(t))' = \cosh(t)
\quad \text{and} \quad 
(\cosh(t))' = \sinh(t),
$$
we now define suitable hyperbolic functions sine and cosine on time scales, 
denoted respectively by $\sinh_\mathbb{T}(t)$ and $\cosh_\mathbb{T}(t)$.

\begin{definition}[Hyperbolic functions]
\label{def:hf}
Let $\mathbb{T}$ be an arbitrarily given time scale 
with graininess function $\mu(t)$. The 
hyperbolic sine on the time scale $\mathbb{T}$
is the function $\sinh_\mathbb{T} : \mathbb{T} \rightarrow \mathbb{R}$
defined by 
\begin{equation}
\label{eq:def:sinh}
\sinh_\mathbb{T}(t) 
= \frac{\left(e^{t+\mu(t)} - e^{t}\right)+\left(e^{-(t+\mu(t))}-e^{-t}\right)}{2\mu(t)}. 
\end{equation}
The hyperbolic cosine on the time scale $\mathbb{T}$
is the function $\cosh_\mathbb{T} : \mathbb{T} \rightarrow \mathbb{R}$ given by
\begin{equation}
\label{eq:def:cosh}
\cosh_\mathbb{T}(t) = \frac{\left(e^{t+\mu(t)} - e^{t}\right)
-\left(e^{-(t+\mu(t))}-e^{-t}\right)}{2\mu(t)}. 
\end{equation}
\end{definition}

\begin{remark}
\label{rem:sinh:cosh}
For $\mathbb{T} = \mathbb{R}$ one has $\mu(t) \equiv 0$
and we obtain from \eqref{eq:def:sinh} and \eqref{eq:def:cosh}
the classical definitions:
$$
\sinh_\mathbb{R}(t) = \frac{e^{2t} - 1}{2 e^t} = \sinh(t),
\quad \cosh_\mathbb{R}(t) = \frac{e^{2t} + 1}{2 e^t} = \cosh(t).
$$
\end{remark}

\begin{theorem}
Let $\mathbb{T}$ be an arbitrarily given time scale.
The following relation holds:
\begin{equation}
\label{rel:cosh:sinh}
\cosh_\mathbb{T}^2(t) - \sinh_\mathbb{T}^2(t) 
= \frac{e^{\mu(t)} +e^{-\mu(t)}-2}{\mu^{2}(t)}.
\end{equation}
\end{theorem}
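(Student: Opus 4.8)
The plan is to exploit the difference-of-squares factorization
$\cosh_\mathbb{T}^2(t) - \sinh_\mathbb{T}^2(t) = \bigl(\cosh_\mathbb{T}(t) - \sinh_\mathbb{T}(t)\bigr)\bigl(\cosh_\mathbb{T}(t) + \sinh_\mathbb{T}(t)\bigr)$,
which keeps the verification short and transparent, exactly in the spirit of the proof of the Pythagorean-type identity \eqref{rel:cos:sin}. To organize the algebra I would first introduce the abbreviations $A = e^{t+\mu(t)} - e^{t}$ and $B = e^{-(t+\mu(t))} - e^{-t}$, so that \eqref{eq:def:cosh} and \eqref{eq:def:sinh} read $\cosh_\mathbb{T}(t) = (A - B)/(2\mu(t))$ and $\sinh_\mathbb{T}(t) = (A + B)/(2\mu(t))$.

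With this notation, subtracting and adding gives at once $\cosh_\mathbb{T}(t) - \sinh_\mathbb{T}(t) = -B/\mu(t)$ and $\cosh_\mathbb{T}(t) + \sinh_\mathbb{T}(t) = A/\mu(t)$, so the left-hand side of \eqref{rel:cosh:sinh} equals $-AB/\mu^2(t)$. It then remains only to compute the product $AB$. Factoring $A = e^{t}\bigl(e^{\mu(t)} - 1\bigr)$ and $B = e^{-t}\bigl(e^{-\mu(t)} - 1\bigr)$, the factors $e^{t}$ and $e^{-t}$ cancel and I would expand $AB = \bigl(e^{\mu(t)} - 1\bigr)\bigl(e^{-\mu(t)} - 1\bigr) = 2 - e^{\mu(t)} - e^{-\mu(t)}$, whence $-AB/\mu^2(t)$ is precisely the right-hand side of \eqref{rel:cosh:sinh}.

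I do not expect any genuine obstacle here: the single point deserving attention is the cancellation of the $t$-dependent factors $e^{\pm t}$ in the product $AB$, which is exactly what forces the answer to depend on the graininess $\mu(t)$ alone; the symmetry of the exponents $\pm(t+\mu(t))$ built into Definition~\ref{def:hf} is what makes this cancellation possible. Finally, paralleling Remark~\ref{rem:sinh:cosh}, I would check consistency in the continuous limit $\mu(t) \to 0$: a Taylor expansion gives $e^{\mu(t)} + e^{-\mu(t)} - 2 = \mu^2(t) + o(\mu^2(t))$, so the right-hand side tends to $1$ and \eqref{rel:cosh:sinh} recovers the classical identity $\cosh^2(t) - \sinh^2(t) = 1$.
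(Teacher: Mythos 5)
Your proposal is correct and amounts to the same argument as the paper's, which simply states that the identity ``follows from Definition~\ref{def:hf} by a direct computation''; your difference-of-squares factorization $\bigl(\cosh_\mathbb{T}(t)-\sinh_\mathbb{T}(t)\bigr)\bigl(\cosh_\mathbb{T}(t)+\sinh_\mathbb{T}(t)\bigr) = -AB/\mu^2(t)$ is just a clean way of organizing that computation, and every step (including $AB = 2 - e^{\mu(t)} - e^{-\mu(t)}$) checks out. The closing remark on the limit $\mu(t)\to 0$ is consistent with the paper's stated convention for resolving $0/0$ expressions, so nothing is missing.
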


\begin{proof}
Follows from Definition~\ref{def:hf} by a direct computation.
\end{proof}

\begin{remark}
For $\mathbb{T} = \mathbb{R}$ one obtains from Remark~\ref{rem:sinh:cosh}
and \eqref{rel:cosh:sinh} the most important qualitative property 
of the hyperbolic functions in the continuous case: 
$\cosh^2(t) - \sinh^2(t) = 1$.
\end{remark}


\section*{Acknowledgements}

This work was partially supported by Portuguese funds through CIDMA
and FCT, within project UID/MAT/04106/2013. 



\end{document}